\newcommand     {\comment}[1]   {}
\newtheorem{lemma}{Lemma}[section]
\newtheorem{corollary}[lemma]{Corollary}
\newtheorem{remark}[lemma]{Remark}
\begin{document}

\vspace*{-17ex}

\title{A Chiang-type Lagrangian in $\mathbb{CP}^2$}

\author{Ana Cannas da Silva}


\address{Department of Mathematics, ETH Zurich, R\"amistrasse 101,
8092 Z\"urich, Switzerland}
\email{acannas@math.ethz.ch}

\date{July 2015}

\maketitle

\begin{abstract}
We analyse a simple Chiang-type lagrangian in $\mathbb{CP}^2$
which is topologically an $\mathbb{RP}^2$
but exhibits a distinguishing behaviour under
reduction by one of the toric circle actions, in particular
it is a {\em one-to-one transverse lifting}
of a great circle in $\mathbb{CP}^1$.
\end{abstract}

\vspace*{3ex}


\section[]{Lagrangians in a symplectic reduction scenario}


Let $(M,\omega)$ be a $2n$-dimensional symplectic manifold
equipped with a hamiltonian action of a
$k$-dimensional real torus $T^k$ where $k < n$ and corresponding moment map
\[
   \mu : M \to \mathbb{R}^k \ .
\]

We assume that $a \in \mathbb{R}^k$ is a regular value of $\mu$ such that
the reduced space
\[
   M_{red} = \mu^{-1}(a) / T^k
\]
is a manifold.
The latter comes with the so-called {\em reduced} symplectic form
$\omega_{red}$ satisfying the equation $\pi^* \omega_{red} = \imath^* \omega$,
where $\pi: \mu^{-1}(a) \to M_{red}$ is the point-orbit projection and
$\imath : \mu^{-1}(a) \hookrightarrow M$ is the set inclusion:
\[
\begin{array}{rcl}
   \mu^{-1}(a) & \stackrel{\imath}{\hookrightarrow} & M \\
   \downarrow \pi \\
   M_{red}
\end{array}
\]

Let $\ell_1 \subset M_{red}$ be a compact lagrangian submanifold
of the reduced space.
Then its preimage in $M$,
\[
    L_1 := \imath( \pi^{-1} (\ell_1)) \ ,
\]
is always a compact lagrangian submanifold of $(M, \omega)$,
which happens to lie entirely in the level set $\mu^{-1}(a)$.

Motivated by a question of Katrin Wehrheim's related to her
joint work with Chris Woodward~\cite{ww_quilted},
we are looking for a reverse picture where -- starting from a lagrangian
in $(M, \omega)$ -- we obtain a lagrangian in $(M_{red},\omega_{red})$.
For that purpose suppose now that we have a compact lagrangian
submanifold $L_2 \subset M$
such that
\begin{itemize}
\item
$L_2 \pitchfork \mu^{-1}(a)$, that is, $L_2$ and $\mu^{-1}(a)$ {\em intersect
transversally} and
\item
$L_2 \cap \mu^{-1}(a) \stackrel{\pi}{\hookrightarrow} M_{red}$,
that is, $L_2$ intersects
each $T^n$-orbit in $\mu^{-1}(a)$ at most once,
\end{itemize}
so the intersection submanifold $L_2 \cap \mu^{-1}(a)$ injects
into the reduced space $M_{red}$ via the point-orbit projection $\pi$.
In this case we obtain an embedded lagrangian submanifold in the reduced
space $(M_{red},\omega_{red})$, namely
\[
   \ell_2 := \pi (L_2 \cap \mu^{-1}(a)) \ ,
\]
and we call $L_2$ a {\em one-to-one transverse lifting} of $\ell_2$.

In this note we will concentrate on the case where the symplectic manifold
is the complex projective plane, $M = \mathbb{CP}^2$, with a
scaled Fubini-Study structure so that the total volume is $\frac{\pi^2}{2}$.
We regard the circle action
where a circle element $e^{i\theta} \in S^1$ ($0 \leq \theta < 2\pi$) acts by
\[
   [z_0:z_1:z_2] \longmapsto [z_0:z_1:e^{i\theta}z_2]
\]
with moment map
\[
    \mu_2 : \mathbb{CP}^2 \to \mathbb{R} \ , \quad
    \mu_2 [z_0:z_1:z_2] = -\frac 12 \frac{|z_2|^2}{|z_0|^2+|z_1|^2+|z_2|^2}
\]
and the chosen level is $a = -\frac 16$.\footnote{All other levels
in the range $-\frac 14 < a < 0$ allow the {\em one-to-one transverse
lifting property}, but this level (that of the so-called
{\em Clifford torus} $\{[1:e^{i\theta_1}:e^{i\theta_2}]\}$)
ensures the monotonicity of $\ell_2$; see Remark~\ref{remark_monotonicity}.}
With these choices we have $n=2$, $k=1$ and the reduced space
$(M_{red},\omega_{red})$ is a
complex projective line $\mathbb{CP}^1$ with a scaled Fubini-Study form
so that the total area is $\frac{2\pi}{3}$.

Pictorially in terms of toric moment polytopes we have for
the original space $\mathbb{CP}^2$:


\begin{center}
\includegraphics[scale=0.7]{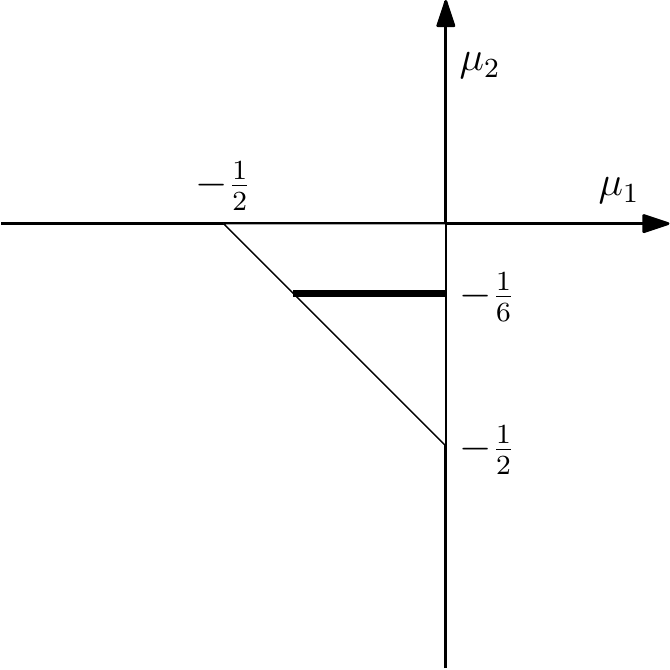}
\end{center}


\noindent
and for the reduced space $\mathbb{CP}^1$:

\begin{center}
\includegraphics[scale=0.7]{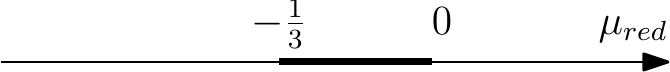}
\end{center}


The reduction is carried out with respect to the moment map $\mu_2$.
The other standard hamiltonian circle action with moment map
$\mu_1 [z_0:z_1:z_2] = -\frac 12 \frac{|z_1|^2}{|z_0|^2+|z_1|^2+|z_2|^2}$
descends to the reduced space, where we denote the induced
moment map $\mu_{red} : \mathbb{CP}^1 \to \mathbb{R}$.

Section~\ref{section_l2} exhibits a non-standard lagrangian embedding
of $\mathbb{RP}^2$ into $\mathbb{CP}^2$,
which in Section~\ref{section_properties} is shown to be a
{\em one-to-one transverse lifting} of a great circle in $\mathbb{CP}^1$.


\section[]{A non-standard lagrangian $\mathbb{RP}^2$
in $\mathbb{CP}^2$ from representation theory}
\label{section_l2}

The example at stake is a lagrangian submanifold $\mathcal{L}_2$ of
$\mathbb{CP}^2$ which arises as an orbit for an
$SU(2)$-action, similar to a lagrangian studied by
River Chiang in~\cite{chiangs_lagrangians}.
We first define $\mathcal{L}_2$ explicitly as a subset of $\mathbb{CP}^2$:
\[
   \mathcal{L}_2 := \left\{ \left[ \bar{\alpha}^2 + \bar{\beta}^2 :
   \sqrt{2}(\bar{\alpha} \beta - \alpha \bar{\beta}) :
   \alpha^2 + \beta^2 \right] \; \mid \; \alpha, \beta \in \mathbb{C} \ ,
   |\alpha|^2+|\beta|^2=1 \right\} \ .
\]

\begin{lemma}
The set $\mathcal{L}_2$ is a submanifold of
$\mathbb{CP}^2$ diffeomorphic to $\mathbb{RP}^2$.
\end{lemma}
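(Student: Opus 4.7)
The plan is to realize $\mathcal{L}_2$ as the orbit of a single point under a suitable $SU(2)$-action on $\mathbb{CP}^2$, and then to identify this orbit with $\mathbb{RP}^2$ via a stabilizer computation.

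First I would consider the symmetric-square representation of $SU(2)$ on $\mathrm{Sym}^2\mathbb{C}^2 \cong \mathbb{C}^3$; fixing the orthonormal basis $\{e_1^2,\, \sqrt{2}\, e_1 e_2,\, e_2^2\}$ induces a smooth action of $SU(2)$ on $\mathbb{CP}^2$. A direct computation with the element of $SU(2)$ corresponding to $(\alpha,\beta) \in S^3$ (under the standard identification $SU(2) \cong S^3$) applied to the vector $e_1^2 + e_2^2$ reproduces, up to an explicit involution of $S^3$ such as $\beta \mapsto -\beta$, the coordinate vector in the definition of $\mathcal{L}_2$. Thus $\mathcal{L}_2 = SU(2) \cdot [1:0:1]$. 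Since $SU(2)$ is compact and acts smoothly, this orbit is automatically an embedded submanifold of $\mathbb{CP}^2$ diffeomorphic to the homogeneous space $SU(2)/H$, where $H$ is the stabilizer of $[1:0:1]$.

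Next I would compute $H$ explicitly. Writing out $g \cdot (e_1^2 + e_2^2)$ in the orthonormal basis and demanding that it be a scalar multiple of $e_1^2 + e_2^2$ yields two real conditions on $(\alpha,\beta)$: $\alpha\beta \in \mathbb{R}$ (vanishing of the middle coefficient) and $\alpha^2 - \beta^2 \in \mathbb{R}$ (equality of the outer two). Writing $\alpha = a + bi$ and $\beta = c + di$, a short case analysis shows that these are solved on $S^3$ precisely when $(\alpha,\beta) \in \mathbb{R}^2 \cap S^3$ or $(\alpha,\beta) \in i\mathbb{R}^2 \cap S^3$. Hence $H$ is a disjoint union of two circles, forming the preimage in $SU(2)$ of the subgroup $O(2) \subset SO(3)$ that stabilizes a line through the origin in $\mathbb{R}^3$, under the standard double cover $SU(2) \to SO(3)$. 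Consequently $\mathcal{L}_2 \cong SU(2)/H \cong SO(3)/O(2) \cong \mathbb{RP}^2$.

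The main obstacle is the stabilizer computation: one must solve the two real equations carefully to confirm the two-circle structure, and then recognize these circles as the double cover of the line-stabilizing $O(2) \subset SO(3)$. Everything else --- the identification of $\mathcal{L}_2$ as an $SU(2)$-orbit, the automatic submanifold structure from compactness, and the classical identification $\mathbb{RP}^2 \cong SO(3)/O(2)$ --- is routine.
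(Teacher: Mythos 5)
Your proposal is correct and follows essentially the same route as the paper: realize $\mathcal{L}_2$ as the $SU(2)$-orbit of $[1:0:1]$ under the symmetric-square (degree-2 polynomial) representation, compute the stabilizer to be a two-component group whose components are the real and purely imaginary circles in $S^3$, and conclude $\mathcal{L}_2 \cong SU(2)/H \cong \mathbb{RP}^2$. The only cosmetic difference is that you pass through $SO(3)/O(2)$ while the paper identifies the quotient with $\mathbb{RP}^2$ directly from the two-component stabilizer.
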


\begin{proof}
We view $\mathbb{CP}^2$ as the space of homogeneous complex polynomials
of degree 2 in two variables $x$ and $y$ up to scaling,
\[
   [ z_0 : z_1 : z_2 ] \quad \longleftrightarrow \quad
   p_{[ z_0 : z_1 : z_2 ]} (x,y) = z_0 y^2 + z_1 \sqrt{2} xy + z_2 x^2 \ ,
\]
and let the group $SU(2)$ act as follows.
An element
\[
   A = \begin{pmatrix}
     \alpha & \beta \\
     -\bar{\beta} & \bar{\alpha}
     \end{pmatrix} \in SU(2) \quad \mbox{ where } \quad
   \alpha, \beta \in \mathbb{C} \mbox{ and } |\alpha|^2+|\beta|^2=1 \ ,
\]
acts by the change-of-variables diffeomorphism
\[
   p(x,y) \quad \longmapsto \quad p \left( (x,y) A \right)
   = p \left( \alpha x - \bar{\beta} y ,
   \beta x + \bar{\alpha} y \right) \ .
\]
Then the set $\mathcal{L}_2$ is simply the $SU(2)$-orbit of the polynomial
$p_{[1:0:1]}(x,y) = y^2+x^2$
or, equivalently, of the point $[1:0:1]$.

So $\mathcal{L}_2$ is an $SU(2)$-homogeneous space.
Since the stabilizer of $[1:0:1] \in \mathcal{L}_2$ is the
subgroup
\[
   \mathcal{S} := \left\{ \begin{pmatrix}
   \cos \theta & \sin \theta \\
   -\sin \theta & \cos \theta \end{pmatrix} \ ,
   \begin{pmatrix}
   i \cos \theta & i \sin \theta \\
   i \sin \theta & -i \cos \theta \end{pmatrix}
   \mid \theta \in [0,2\pi) \right\}
\]
where the component of the identity is a circle subgroup of $SU(2)$
(hence conjugate to the Hopf subgroup),
it follows that $\mathcal{L}_2$ is diffeomorphic to
\[
   SU(2) \; / \; \mathcal{S} \simeq \mathbb{RP}^2 \ .
\]
\end{proof}

There is a convenient real description of $\mathcal{L}_2$ by
defining
\[
   X := \Re (\alpha^2 + \beta^2) \ , \quad
   Y := \Im (\alpha^2 + \beta^2) \quad \mbox{ and } \quad
   Z := 2 \Im (\bar{\alpha} \beta) \ .
\]
In these terms we have
\[
   \mathcal{L}_2 = \left\{ \left[ X-iY : \sqrt{2}iZ : X+iY \right] \; \mid \;
   X, Y, Z \in \mathbb{R} \ , X^2+Y^2+Z^2 = 1 \right\} \ ,
\]
which also shows that
$\mathcal{L}_2 \simeq S^2 / \pm 1 \simeq \mathbb{RP}^2 \ .$

\begin{lemma}
The submanifold $\mathcal{L}_2$ is lagrangian.
\end{lemma}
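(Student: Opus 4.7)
The plan is to combine two observations: $\mathcal{L}_2$ is the full $SU(2)$-orbit of $[1:0:1]$ (previous lemma), and the $SU(2)$-action preserves the Fubini-Study form. The latter holds because the action on the space of degree-two polynomials is the symmetric square of the standard representation on $\mathbb{C}^2$, and the normalization $p_{[z_0:z_1:z_2]}(x,y) = z_0 y^2 + z_1\sqrt{2}\,xy + z_2 x^2$ was chosen precisely so that $\{y^2,\sqrt{2}\,xy,x^2\}$ is an orthonormal basis of $\mathrm{Sym}^2\mathbb{C}^2$; thus $SU(2)$ acts through $SU(3)$ on $\mathbb{C}^3$ and in particular preserves $\omega$.

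As a consequence, the pullback $\omega|_{\mathcal{L}_2}$ is invariant under the transitive $SU(2)$-action and hence is determined by its value at a single point. Since $\dim_{\mathbb{R}}\mathcal{L}_2 = 2 = \tfrac{1}{2}\dim_{\mathbb{R}}\mathbb{CP}^2$, the lagrangian claim reduces to showing that $T_{[1:0:1]}\mathcal{L}_2$ is isotropic. This I would do in the affine chart $(z_1,z_2)=(Z_1/Z_0,Z_2/Z_0)$ by differentiating the orbit parametrization at $(\alpha,\beta)=(1,0)$: writing $\alpha'(0)=ia$ (forced purely imaginary by the relation $|\alpha|^2+|\beta|^2\equiv 1$) and $\beta'(0)=b+ic$, the real parameter $b$ turns out to span the identity component of $\mathcal{S}$ and drops out, while the remaining two directions produce tangent vectors that are each purely imaginary in $\mathbb{C}^2$. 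Since the Fubini-Study form at $(0,1)$ is a real diagonal combination of the terms $dz_j\wedge d\bar z_j$, it vanishes on any pair of such purely imaginary vectors, giving the required isotropy.

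There is no serious obstacle; the only delicate point is correctly identifying the stabilizer direction so that one really has captured all of $T_{[1:0:1]}\mathcal{L}_2$. As a cleaner alternative, the real description $\mathcal{L}_2=\bigl\{[X-iY:\sqrt{2}iZ:X+iY]\bigr\}$ exhibits $\mathcal{L}_2$ inside the fixed locus of the anti-symplectic involution $[z_0:z_1:z_2]\mapsto[\bar z_2:-\bar z_1:\bar z_0]$, and the standard fact that fixed sets of anti-symplectic involutions are lagrangian then settles the claim at once.
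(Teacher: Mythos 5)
Your argument is correct, but it takes a genuinely different route from the paper. The paper also starts from the fact that the $SU(2)$-action is a unitary representation on $Sym^2(V^*)$ (hence symplectic on $\mathbb{CP}^2$), but instead of computing a tangent space it invokes the moment-map criterion: the action is hamiltonian with an explicit $su(2)^*$-valued moment map $\tilde{\tilde{\mu}}$, and $\mathcal{L}_2$ is an orbit lying in the zero level of $\tilde{\tilde{\mu}}$ (since $|z_0|^2=|z_2|^2$ and $z_0\bar z_1+z_1\bar z_2=0$ there), so it is automatically isotropic; half-dimensionality then gives lagrangian. Your main argument is the ``direct'' check that the paper explicitly mentions as an alternative but does not carry out: you use transitivity to reduce isotropy to the single point $[1:0:1]$ and verify there that the two non-stabilizer directions give purely imaginary tangent vectors in the affine chart, on which the (diagonal, real-coefficient, since $z_1=0$ at that point) Fubini--Study form vanishes. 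The computation checks out: $\alpha'(0)=ia$, $\beta'(0)=b+ic$, the $b$-direction is the stabilizer, and the chart derivatives are $(2\sqrt{2}ic,\,4ia)$. What the paper's route buys is a computation-free conceptual statement (orbits in the zero level of an equivariant moment map are isotropic) plus reusable formulas for $\tilde{\tilde{\mu}}$; what yours buys is elementarity --- no moment map needed, only invariance of $\omega$. Your closing alternative via the anti-symplectic involution $[z_0:z_1:z_2]\mapsto[\bar z_2:-\bar z_1:\bar z_0]$ is also valid (one checks it is the composition of complex conjugation with a unitary map, and its fixed locus is exactly the real description $\{[X-iY:\sqrt{2}iZ:X+iY]\}$ of $\mathcal{L}_2$), and is arguably the shortest of the three proofs; it does not appear in the paper at all.
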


\begin{proof}
The fact that $\mathcal{L}_2$ is isotropic can be checked either
directly (computing the vector fields generated by the $SU(2)$ action)
or, as done here, analysing a hamiltonian action in the
context of representation theory.

The standard hermitian metric on the vector space $V := \mathbb{C}^2$,
namely
\[
   h(z,w) = \bar{z}^T w =
   \underbrace{\Re (\bar{z}^T w)}_{\begin{array}{c}\text{\tiny{euclidean}}\\ \text{\tiny{inner prod.}}\end{array}}
   + i\underbrace{\Im (\bar{z}^T w)}_{\begin{array}{c}\text{\tiny{standard}}\\ \text{\tiny{sympl.\ str.}}\end{array}} \ ,
\]
induces an hermitian metric on the symmetric power $Sym^2 (V^*)$
for which
\[
   y^2 \ , \quad \sqrt{2} xy \ , \quad x^2
\]
is a unitary basis; here $x$ and $y$ (the linear maps
extracting the first and second components of a vector in $\mathbb{C}^2$)
form the standard unitary basis of $V^*$.
We denote $u_0, u_1, u_2$ the corresponding coordinates in $Sym^2 (V^*)$
and write an element of $Sym^2 (V^*)$ as
\[
   p(x,y) = u_0 y^2 + u_1 \sqrt{2} xy + u_2 x^2 \ .
\]
W.r.t.\ these coordinates the symplectic structure on $Sym^2 (V^*)$
is the standard structure:
\[
   \Omega_0 = \textstyle{\frac{i}{2}} \left( du_0 d\bar{u}_0 +
   du_1 d\bar{u}_1 + du_2 d\bar{u}_2 \right) \ .
\]

The diagonal circle action on $Sym^2 (V^*)$ is hamiltonian
with moment map linearly proportional to $|u_0|^2 + |u_1|^2 + |u_2|^2$
and by symplectic reduction at the level of the unit sphere
we obtain the space of homogeneous complex polynomials of degree 2
in two variables $x$ and $y$ up to scaling,
$\mathbb{P} \left( Sym^2 (V^*) \right) \simeq S^5/S^1 \simeq \mathbb{CP}^2$,
with the standard Fubini-Study structure
($S^1$ acts on $S^5$ by the Hopf action).

Now the action of $SU(2)$ on $Sym^2 (V^*)$ by change-of-variables as above
is a {\em unitary} representation (this is the 3-dimensional
irreducible representation of $SU(2)$), hence hamiltonian with
moment map\footnote{In general a symplectic representation
$\rho : G \to Sympl(\mathbb{C}^N, \Omega_0)$ may be viewed as a hamiltonian
action with moment map $\tilde{\mu} : \mathbb{C}^N \to \mathfrak{g}^*$,
$\tilde{\mu}(z)(X^{\mathfrak{g}}) = \frac{i}{4}(z^*Xz-z^*X^*z)$ where
$z \in \mathbb{C}^N$, $X^{\mathfrak{g}} \in \mathfrak{g}$ and $X$ the
matrix representing the corresponding element in the Lie algebra
of $Sympl(\mathbb{C}^N, \Omega_0)$.
In the special case where the representation is {\em unitary}, the
formula for the moment map reduces to
$\tilde{\mu}(z)(X^{\mathfrak{g}}) = \frac{i}{2}z^*Xz$.
In the present case one determines the matrix $X$ for each of the Lie algebra
basis elements $X_a, X_b, X_c \in su(2)$.}
\[
   \tilde{\mu} : Sym^2 (V^*) \longrightarrow su(2)^* \simeq \mathbb{R}^3
\]
defined as follows w.r.t.\ the real basis
\[
   X_a := \begin{pmatrix} i & 0 \\ 0 & -i \end{pmatrix} \ , \quad
   X_b := \begin{pmatrix} 0 & i \\ i & 0 \end{pmatrix} \ , \quad
   X_c := \begin{pmatrix} 0 & 1 \\ -1 & 0 \end{pmatrix} \ ,
\]
of the Lie algebra $su(2)$ and the unitary basis above of $Sym^2 (V^*)$:
\[
   \tilde{\mu} (u_0,u_1,u_2) =
   \begin{pmatrix} |u_0|^2 - |u_2|^2 \\
   -\sqrt 2 \Re (u_0\bar{u}_1+u_1\bar{u}_2) \\
   -\sqrt 2 \Im (u_0\bar{u}_1+u_1\bar{u}_2) \end{pmatrix} \ .
\]
This $SU(2)$ action commutes with the diagonal $S^1$ action,
hence descends to a hamiltonian action of $SU(2)$ on $\mathbb{CP}^2$
with moment map
\[
   \tilde{\tilde{\mu}} : \mathbb{CP}^2 \longrightarrow su(2)^* \simeq \mathbb{R}^3 \ ,
   \quad \tilde{\tilde{\mu}} [z_0:z_1:z_2] =
   \begin{pmatrix} |z_0|^2 - |z_2|^2 \\
   -\sqrt 2 \Re (z_0\bar{z}_1+z_1\bar{z}_2) \\
   -\sqrt 2 \Im (z_0\bar{z}_1+z_1\bar{z}_2) \end{pmatrix} \ ,
\]
where now $z_0,z_1,z_2$ are homogeneous coordinates satisfying
$|z_0|^2 + |z_1|^2 + |z_2|^2 =1$.

The submanifold $\mathcal{L}_2$ is isotropic because it is an
$SU(2)$ orbit and sits in the zero-level of the moment map
$\tilde{\tilde{\mu}}$
-- indeed we have $|z_0|^2 = |z_2|^2$ and $z_0\bar{z}_1+z_1\bar{z}_2=0$
when $z_0 = \bar{z}_2$ and $z_1$ is imaginary.
It follows that $\mathcal{L}_2$ is lagrangian because it is half-dimensional.
\end{proof}


\section[]{Properties of the lagrangian $\mathcal{L}_2$}
\label{section_properties}

\begin{lemma}
\label{lemma_tranversality}
The submanifold $\mathcal{L}_2$ intersects transversally
the moment map level set $\mu_2^{-1}(a)$.
\end{lemma}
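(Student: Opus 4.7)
The plan is to reduce the transversality statement to a regularity statement for a function on $\mathcal{L}_2$ and then to compute that function explicitly using the real description of $\mathcal{L}_2$ given in Section~\ref{section_l2}.

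Since $a = -\frac{1}{6}$ is a regular value of $\mu_2$, the level set $\mu_2^{-1}(a)$ is a smooth hypersurface with tangent space $\ker d\mu_2$, so transversality of $\mathcal{L}_2$ with $\mu_2^{-1}(a)$ at a point $p$ is equivalent to $d\mu_2|_p$ not vanishing on $T_p\mathcal{L}_2$. Equivalently, it suffices to show that $a$ is a regular value of the restriction $f := \mu_2|_{\mathcal{L}_2}$.

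Using the real parametrization
\[
   \mathcal{L}_2 = \left\{\left[X-iY : \sqrt{2}iZ : X+iY\right] \mid X^2+Y^2+Z^2=1\right\},
\]
I would plug into the formula for $\mu_2$: the denominator equals $|X-iY|^2 + 2Z^2 + |X+iY|^2 = 2(X^2+Y^2+Z^2) = 2$, while the numerator piece $|z_2|^2$ equals $X^2+Y^2$. Thus
\[
   f(X,Y,Z) = -\tfrac{1}{4}(X^2+Y^2),
\]
viewed as a function on the sphere $X^2+Y^2+Z^2=1$ modulo the antipodal identification (which preserves $X^2+Y^2$, so $f$ descends well to $\mathcal{L}_2 \simeq S^2/\pm$).

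The critical points of $f$ on the sphere are found by Lagrange multipliers applied to $X^2+Y^2$ subject to $X^2+Y^2+Z^2=1$: they occur exactly where $Z=0$ (critical value $-\tfrac14$) or where $X=Y=0$ (critical value $0$). Since $a=-\tfrac16$ lies strictly between these two values, it is a regular value of $f$, which gives the desired transversality. I expect no serious obstacle: the only thing to be a little careful about is verifying that $f$ is well-defined on the quotient $S^2/\pm$ and that the two images of $p_{[1:0:1]}$ under the antipodal map are genuinely identified, but both of these are immediate from the form of the parametrization.
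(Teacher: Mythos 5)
Your proposal is correct and follows essentially the same route as the paper: both restrict $\mu_2$ to $\mathcal{L}_2$ via the real description, compute $\mu_2 = -\tfrac14(X^2+Y^2)$, and observe that the critical values of this restriction ($0$ and $-\tfrac14$) avoid $a=-\tfrac16$. The only cosmetic difference is that you locate the critical points by Lagrange multipliers on all of $S^2$, while the paper works in the chart $Z=\sqrt{1-X^2-Y^2}$ over the interior of the moment polytope.
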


\begin{proof}
We claim that in the interior of the moment polytope
the restriction of $d\mu_2$ to $\mathcal{L}_2$ never vanishes.
This implies the lemma because the level set $\mu_2^{-1}(a)$
is a codimension one submanifold whose tangent bundle
is the kernel of $d\mu_2$.

Using the real description of $\mathcal{L}_2$,
\[
   \mathcal{L}_2 = \left\{ \left[ X-iY : \sqrt{2}iZ : X+iY \right] \; \mid \;
   X, Y, Z \in \mathbb{R} \ , X^2+Y^2+Z^2 = 1 \right\} \ ,
\]
we parametrize it (in the interior of the moment polytope)
via real coordinates $X$ and $Y$, choosing
\[
   Z = \sqrt{1-X^2-Y^2} \ , \qquad X^2+Y^2 < 1 \ ,
\]
so that that part of $\mathcal{L}_2$ is the set of points
\[
   \left[ \frac{X-iY}{\sqrt{2}i\sqrt{1-X^2-Y^2}} : 1 :
   \frac{X+iY}{\sqrt{2}i\sqrt{1-X^2-Y^2}} \right] \ .
\]
We evaluate $\mu_2$ in points of the above form:
\[
   \mu_2 \textstyle{\left[ \frac{X-iY}{\sqrt{2}i\sqrt{1-X^2-Y^2}} : 1 :
   \frac{X+iY}{\sqrt{2}i\sqrt{1-X^2-Y^2}} \right]} =
   -\frac 12 \frac{|X+iY|^2}{|X-iY|^2+2(1-X^2-Y^2)+|X+iY|^2} =
   -\frac 14 (X^2+Y^2) \ .
\]
This shows that the differential of $\mu_2$ only vanishes when
$\mu_2$ itself vanishes, which never happens in the interior of the
moment polytope.
\end{proof}

It is interesting also to discuss the image of $\mathcal{L}_2$
by the full toric moment map $(\mu_1,\mu_2)$.
This image satisfies the equation $\mu_1 + 2\mu_2 = - \frac 12$ because
\[
   -\frac 12 \frac{|z_1|^2+2|z_2|^2}{|z_0|^2+|z_1|^2+|z_2|^2} = - \frac 12
   \quad \iff \quad |z_0|^2 = |z_2|^2
\]
holds at all points in $\mathcal{L}_2$.
Moreover this image is connected and contains the endpoints:
\[
   \mu_2 [1:0:1] = -\frac 14 \quad \mbox{ when } \quad (\alpha,\beta)=(1,0)
\]
and
\[
   \mu_2 [0:\sqrt{2}i:0] = 0 \quad \mbox{ when } \quad
   (\alpha,\beta) = \textstyle{(\frac{1}{\sqrt{2}},\frac{i}{\sqrt{2}})} \ ,
\]
hence it follows that the set $\mu (\mathcal{L}_2)$ is the full
skewed line segment in boldface depicted below.

\begin{center}
\includegraphics[scale=0.7]{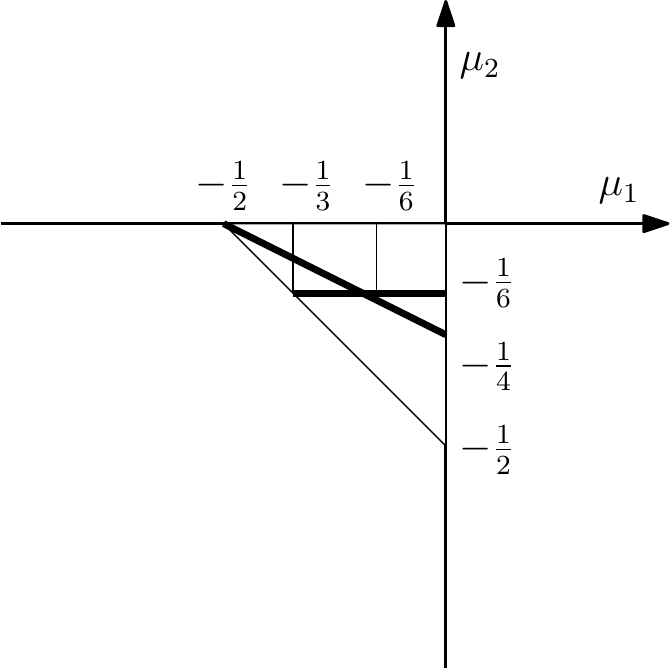}
\end{center}

Now $\mathcal{L}_2$ is invariant
under the following tilted circle action where $e^{i\theta}$ acts by
\[
   [z_0:z_1:z_2] \longmapsto [z_0:e^{i\theta}z_1:e^{2i\theta}z_2] \ .
\]
Indeed, regarding $\theta = 2 \nu$, when we replace
$(\alpha,\beta)$ by $(e^{i\nu}\alpha,e^{i\nu}\beta)$
the corresponding points of $\mathcal{L}_2$ change by
\[
   [z_0:z_1:z_2] \longmapsto [e^{-2i\nu}z_0:z_1:e^{2i\nu}z_2] =
   [z_0:e^{2i\nu}z_1:e^{4i\nu}z_2] \ .
\]

We now use action-angle coordinates $(\theta_1,\mu_1,\theta_2,\mu_2)$
which are valid {\em in the interior of the moment polytope}.
W.r.t.\ these coordinates the Fubini-Study form is
$d\theta_1 \wedge d\mu_1 + d\theta_2 \wedge d\mu_2$.
In these terms the discussion above shows that
the tangent space at any point $[z_0:z_1:z_2]$ of $\mathcal{L}_2$
with all $z_0,z_1,z_2 \neq 0$ contains the tangent vector
$\frac{\partial}{\partial \theta_1}+2\frac{\partial}{\partial \theta_2}$
and is contained in the coisotropic subspace space spanned by
$\frac{\partial}{\partial \theta_1}$,
$\frac{\partial}{\partial \theta_2}$,
$2\frac{\partial}{\partial \mu_1}-\frac{\partial}{\partial \mu_2}$
(this subspace is the kernel of $d\mu_1+2d\mu_2=0$).

So a generating system of $T\mathcal{L}_2$ may be chosen to be
$v_1 := \frac{\partial}{\partial \theta_1}+2\frac{\partial}{\partial \theta_2}$
and $v_2$ of the form
\[
   f \frac{\partial}{\partial \theta_1}
   + g \frac{\partial}{\partial \theta_2}
   + \left(2\frac{\partial}{\partial \mu_1}-\frac{\partial}{\partial \mu_2} \right)
\]
for some real functions $f$ and $g$
(we used the transversality fact ``$d\mu_2 \neq 0$ on $\mathcal{L}_2$ in the
interior of the moment polytope''; see the proof of
Lemma~\ref{lemma_tranversality}).



\begin{remark}
By shuffling the coordinates there are other two similar
lagrangian submanifolds whose images under the toric moment map
are the other two medians (line segments joining a vertex to the
midpoint of the opposing side).\\
If we consider lagrangians in $\mathbb{CP}^2$ whose image
by the toric moment map is the intersection of the moment polytope
with an affine space, there are also the trivial examples given by:
\begin{itemize}
\item
interior points (the lagrangians are the action tori),
\item
line segments {\em not through the vertices} and with specific
rational slopes (the lagrangians are the preimages by single-circle
reduction of an $\mathbb{RP}^1$ in $\mathbb{CP}^1$, which topologically
are tori) and
\item
the full polytope (the lagrangians are the standard $\mathbb{RP}^2$
and its toric rotations).
\end{itemize}
\end{remark}

\vspace*{1ex}

\begin{lemma}
\label{once}
In the interior of the moment polytope,
the manifold $\mathcal{L}_2$ intersects at most once each orbit of the second
circle action in $\mathbb{CP}^2$.
\end{lemma}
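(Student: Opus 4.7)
The plan is to work directly with the real parametrization $\mathcal{L}_2 = \{[X-iY : \sqrt{2}iZ : X+iY] : X^2+Y^2+Z^2 = 1\}$ given in the previous section, and to show that two such points lying in a common $S^1$-orbit must come from antipodal triples $\pm(X,Y,Z)$ on $S^2$, hence correspond to the same point in $\mathcal{L}_2 \simeq S^2/\{\pm 1\}$.

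First I would fix a point in the interior of the moment polytope, which means a point of $\mathcal{L}_2$ with all three homogeneous coordinates nonzero; in terms of the real parameters this amounts to $Z \neq 0$ and $X+iY \neq 0$. Suppose that $[X-iY:\sqrt{2}iZ:X+iY]$ and $[X'-iY':\sqrt{2}iZ':X'+iY']$ lie in the same orbit of the circle action $[z_0:z_1:z_2]\mapsto[z_0:z_1:e^{i\theta}z_2]$. Unwinding the definition of projective equivalence together with this action, there must exist $\lambda \in \mathbb{C}^*$ and $\theta \in \mathbb{R}$ with
\[
   X'-iY' = \lambda(X-iY), \qquad
   \sqrt{2}iZ' = \lambda \sqrt{2}iZ, \qquad
   X'+iY' = \lambda e^{i\theta}(X+iY).
\]

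Next I would extract the consequences of these three equations. The middle equation gives $\lambda = Z'/Z$, which is real since $Z, Z' \in \mathbb{R}$ and $Z \neq 0$. Substituting back into the first equation and separating real and imaginary parts yields $X' = \lambda X$ and $Y' = \lambda Y$. Imposing the normalization $X'^2+Y'^2+Z'^2 = 1 = X^2+Y^2+Z^2$ then forces $\lambda^2 = 1$, so $\lambda = \pm 1$. In either case $(X',Y',Z') = \pm(X,Y,Z)$, and since the map $(X,Y,Z)\mapsto[X-iY:\sqrt{2}iZ:X+iY]$ identifies antipodal points (this is precisely the isomorphism $\mathcal{L}_2 \simeq S^2/\{\pm 1\} \simeq \mathbb{RP}^2$ observed earlier), the two starting points of $\mathcal{L}_2$ coincide.

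I expect the argument to be essentially this computation; the only subtle point, which is worth stating explicitly, is that genuine distinctness of the two intersection candidates in $\mathcal{L}_2$ must be measured modulo the $\pm 1$ antipodal identification on $S^2$, and that the tilted circle action $[z_0:z_1:z_2]\mapsto[z_0:e^{i\theta}z_1:e^{2i\theta}z_2]$ identified earlier as an invariance of $\mathcal{L}_2$ is a different action (involving $z_1$ as well), so it does not contradict the uniqueness proved here. This observation, together with the fact that the interior of the moment polytope hypothesis is exactly what ensures $Z\neq 0$ and $X+iY\neq 0$ so that the scalar $\lambda$ is determined, completes the proof.
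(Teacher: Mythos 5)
Your argument is correct and complete, but it runs along a different computational track than the paper's. The paper keeps the group-orbit parametrization by $(\alpha,\beta)\in S^3$, normalizes the first homogeneous coordinate to $1$, and then plays a small conjugation game: from the middle coordinate (using that $\bar a b-a\bar b$ is purely imaginary) it deduces that the ratio $\rho=(\bar a^2+\bar b^2)/(\bar\alpha^2+\bar\beta^2)$ is \emph{real}, after which the last coordinate equation reads $\bar\rho=e^{i\theta}\rho$ and forces $e^{i\theta}=1$. You instead pass to the real description $[X-iY:\sqrt{2}iZ:X+iY]$, where the projective scaling factor $\lambda$ is pinned down immediately as $Z'/Z\in\mathbb{R}$ by the middle coordinate, the first coordinate then gives $(X',Y')=\lambda(X,Y)$, and the sphere normalization forces $\lambda=\pm1$, so the two candidate intersection points coincide under the antipodal identification $\mathcal{L}_2\simeq S^2/\{\pm1\}$. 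The two proofs exploit the same underlying reality constraint (the middle coordinate of $\mathcal{L}_2$ is $i$ times a real number), but your route makes it manifest and avoids the cross-ratio manipulation; it also yields slightly more than needed, since the third equation then gives $e^{i\theta}=1$ directly. You are right to flag the antipodal identification as the one genuinely delicate point, and your use of the interior hypothesis (equivalent to $Z\neq 0$ and $X+iY\neq 0$) is exactly where the paper uses it too, namely to guarantee that the relevant denominators are nonzero.
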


\begin{proof}
We compare two points in $\mathcal{L}_2$ which map to the interior of the moment
polytope, say
\[
   P = \left[ \bar{\alpha}^2 + \bar{\beta}^2 :
   \sqrt{2}(\bar{\alpha} \beta - \alpha \bar{\beta}) :
   \alpha^2 + \beta^2 \right]
\]
and
\[
   p = \left[\bar{a}^2 + \bar{b}^2 :
   \sqrt{2}(\bar{a} b - a \bar{b}) :
   a^2 + b^2 \right]
\]
where $\alpha, \beta, a, b \in \mathbb{C}$ are such that
$|\alpha|^2+|\beta|^2=1$, $|a|^2+|b|^2=1$ and all homogeneous coordinates
of $P$ and $p$ are nonzero.

The goal is to check that such points can never be nontrivially related
by the second circle action, that is,
\[
   \left[\bar{a}^2 + \bar{b}^2 :
   \sqrt{2}(\bar{a} b - a \bar{b}) :
   a^2 + b^2 \right] = \left[\bar{\alpha}^2 + \bar{\beta}^2 :
   \sqrt{2}(\bar{\alpha} \beta - \alpha \bar{\beta}) :
   e^{i\theta}(\alpha^2 + \beta^2) \right]
   \qquad \stackrel{?}{\Longrightarrow} \qquad e^{i\theta}=1
\]
or equivalently
\[
   \left[ 1:
   \textstyle{
   \frac{\sqrt{2}( \bar{a} b - a \bar{b})}{\bar{a}^2 + \bar{b}^2} :
   \frac{a^2 + b^2}{\bar{a}^2 + \bar{b}^2}} \right]
   = \left[1 :
   \textstyle{
   \frac{\sqrt{2}( \bar{\alpha} \beta - \alpha \bar{\beta})}
   {\bar{\alpha}^2 + \bar{\beta}^2} :
   e^{i\theta} \cdot \frac{\alpha^2 + \beta^2}
   {\bar{\alpha}^2 + \bar{\beta}^2}} \right]
   \qquad \stackrel{?}{\Longrightarrow} \qquad e^{i\theta}=1 \ .
\]
We equate the coordinates noting that
$\bar{a} b - a \bar{b} = 2 i \Im (\bar{a}b)$ is an
imaginary number:
\[
   \begin{cases}
   \frac{2\sqrt{2} i \Im (\bar{a}b)}{\bar{a}^2 + \bar{b}^2}
   & = \quad \frac{2\sqrt{2} i \Im (\bar{\alpha}\beta)}
   {\bar{\alpha}^2 + \bar{\beta}^2} \\
   \frac{a^2 + b^2}{\bar{a}^2 + \bar{b}^2}
   & = \quad e^{i\theta} \cdot \frac{\alpha^2 + \beta^2}
   {\bar{\alpha}^2 + \bar{\beta}^2} \ .
   \end{cases}
\]
Equivalently, using converse cross-ratios, we have
\[
   \begin{cases}
   \frac{\Im (\bar{a}b)}{\Im (\bar{\alpha}\beta)}
   & = \quad \frac{\bar{a}^2 + \bar{b}^2}
   {\bar{\alpha}^2 + \bar{\beta}^2} \\
   \frac{a^2 + b^2}{\alpha^2 + \beta^2}
   & = \quad e^{i\theta} \cdot \frac{\bar{a}^2 + \bar{b}^2}
   {\bar{\alpha}^2 + \bar{\beta}^2} \ .
   \end{cases}
\]
The first equation implies that $\rho := \frac{\bar{a}^2 + \bar{b}^2}
{\bar{\alpha}^2 + \bar{\beta}^2}$ is a real number.
Since the fraction on the right-hand side of the second equation
is $\rho$ and on the left is $\bar{\rho}$,
these real numbers must be identical and hence $e^{i\theta}=1$.
\end{proof}

\begin{remark}
A different proof of Lemma~\ref{once} was first found by
Radivoje Bankovic in~\cite{radivojes_thesis}.
There he analysed $\mathcal{L}_2$ as a singular fibration over
the moment map image tilted segment, within the 2-torus fibers,
using our earlier lemmas.\\
The result in Lemma~\ref{once} is interesting in itself
since it shows a contrast between the lagrangian $\mathcal{L}_2$
and the standard embedding
of $\mathbb{RP}^2$ in $\mathbb{CP}^2$ as the fixed locus of complex
conjugation: the standard embedding intersects each orbit of
the second circle action twice (in the interior of the moment polytope),
namely for each point $[y_0:y_1:y_2]$ with $y_0,y_1,y_2 \in \mathbb{R}$
there is also the point $[y_0:y_1:-y_2]$ related by the second circle action.
Similarly for the first circle action.
The lagrangian $\mathcal{L}_2$ has also double intersections
with orbits of the first circle action:
$[ X-iY : \sqrt{2}iZ : X+iY ]$ and $[ X-iY : -\sqrt{2}iZ : X+iY ]$.
\end{remark}

\vspace*{1ex}

\begin{corollary}
\label{upshot}
The lagrangian submanifold $\mathcal{L}_2$ is a one-to-one transverse lifting
of a great circle $\ell_2$ in $\mathbb{CP}^1$.
\end{corollary}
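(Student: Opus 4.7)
The plan is to combine the three preceding lemmas with an explicit identification of the reduced curve in $\mathbb{CP}^1$. Transversality of $\mathcal{L}_2$ with $\mu_2^{-1}(-\tfrac{1}{6})$ is immediate from Lemma~\ref{lemma_tranversality}. For the one-to-one condition, to apply Lemma~\ref{once} I will first verify that $\mathcal{L}_2 \cap \mu_2^{-1}(-\tfrac{1}{6})$ lies in the interior of the moment polytope. Using the real description of $\mathcal{L}_2$ and the formula $\mu_2 = -\tfrac{1}{4}(X^2+Y^2)$ derived in the proof of Lemma~\ref{lemma_tranversality}, the level condition forces $X^2+Y^2 = \tfrac{2}{3}$ and $Z^2 = \tfrac{1}{3}$, so none of the homogeneous coordinates $X \pm iY$ and $\sqrt{2}\,iZ$ vanishes on the intersection. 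Lemma~\ref{once} then supplies the one-to-one property.

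It remains to identify $\ell_2 = \pi(\mathcal{L}_2 \cap \mu_2^{-1}(-\tfrac{1}{6}))$ as a great circle in $\mathbb{CP}^1$. Since $\mathcal{L}_2$ satisfies the median equation $\mu_1 + 2\mu_2 = -\tfrac{1}{2}$ established earlier, on the intersection we automatically have $\mu_1 = -\tfrac{1}{6}$, so $\ell_2 \subset \mu_{red}^{-1}(-\tfrac{1}{6})$. On the level $\mu_2 = -\tfrac{1}{6}$, normalizing $|z_0|^2+|z_1|^2+|z_2|^2=1$ gives $|z_1|^2 \in [0,\tfrac{2}{3}]$ and hence $\mu_{red}$ ranges over $[-\tfrac{1}{3},0]$; the value $-\tfrac{1}{6}$ is the midpoint and its level set is the equator of the reduced $\mathbb{CP}^1$ --- a great circle.

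The main obstacle is ruling out the a priori possibility that $\ell_2$ is only a proper arc of this equator, and I resolve it using the tilted-circle invariance of $\mathcal{L}_2$ established earlier. The tilted action $[z_0:z_1:z_2] \mapsto [z_0:e^{i\theta}z_1:e^{2i\theta}z_2]$ commutes with the second circle action and descends to $\mathbb{CP}^1$ as the standard circle action $[w_0:w_1] \mapsto [w_0:e^{i\theta}w_1]$, which is transitive on every non-fixed level set of $\mu_{red}$. Since $\mathcal{L}_2$ is preserved by the tilted action, $\ell_2$ is preserved by the descended action and, being nonempty, must be a full orbit; hence $\ell_2 = \mu_{red}^{-1}(-\tfrac{1}{6})$ is the entire great circle.
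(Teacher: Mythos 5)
Your proposal is correct and follows essentially the same route as the paper: combine Lemma~\ref{lemma_tranversality} and Lemma~\ref{once} to get the one-to-one transverse lifting, then locate $\ell_2$ in the middle level set $\mu_{red}^{-1}(-\tfrac16)$ to identify it as a great circle. The two details you add --- checking via $\mu_2=-\tfrac14(X^2+Y^2)$ that the intersection lies in the interior of the moment polytope (needed to invoke Lemma~\ref{once} as stated), and using the tilted-circle invariance to see that $\ell_2$ is the whole equator rather than an arc (the paper instead relies implicitly on $\ell_2$ being a compact $1$-dimensional submanifold of the circle) --- are correct refinements, not a different argument.
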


\begin{proof}
The previous two lemmas show that $\mathcal{L}_2$ is a one-to-one
transverse lifting of a compact lagrangian submanifold $\ell_2$
in $\mathbb{CP}^1$.
In order to see that $\ell_2$ is a great circle it is enough to
note that it lies in the middle level set of $\mu_{red}$.
\end{proof}

\begin{remark}
\label{remark_monotonicity}
By Corollary~1.2.11 of Biran and Cornea~\cite{biran-cornea}
the lagrangian $\mathcal{L}_2$ is monotone,
has minimal Maslov index $3$ and has
$HF_k(\mathcal{L}_2,\mathcal{L}_2) \simeq \mathbb{Z}_2$
for every $k \in \mathbb{Z}$.
These properties all follow directly from the fact
that $\mathcal{L}_2$ is hamiltonian
isotopic to the standard $\mathbb{RP}^2$ according to Theorem~6.9
of Li and Wu~\cite{li-wu}.
Having chosen the level $a=-\frac 16$ we obtain that the
corresponding lagrangian $\ell_2$ in the reduced space
is a great circle, hence also monotone.
\end{remark}




\end{document}